\numberwithin{equation}{section} %% Comment out for sequentially-numbered
\numberwithin{figure}{section} %% Comment out for sequentially-numbered
\theoremstyle{plain}
\newtheorem{thm}{Theorem}[section]
\theoremstyle{plain}
\newtheorem{lemma}[thm]{Lemma}
\theoremstyle{plain}
\newtheorem{prop}[thm]{Proposition}
\theoremstyle{plain}
\newtheorem{cor}[thm]{Corollary}
\theoremstyle{remark}
\newtheorem{remark}[thm]{Remark}
\theoremstyle{remark}
\newtheorem{note}[thm]{Notation}
\theoremstyle{remark}
\theoremstyle{plain}
\newtheorem*{prop*}{Proposition}
\theoremstyle{remark}
\newtheorem*{rem*}{Remark}
\theoremstyle{plain}
\theoremstyle{remark}
\theoremstyle{definition}
\theoremstyle{definition}
\newtheorem{definition}[thm]{Definition}
\newtheorem*{term}{Terminology}
\newcommand{\C}{\mathbb{C}}
\newcommand{\N}{\mathbb{N}}
\newcommand{\ba}{\mathbb{A}}
\newcommand{\ga}{\mathbb{G}_a}
\newcommand{\der}{\partial}
\newcommand{\LND}{\textup{LND}}
\newcommand{\ML}{\textup{ML}}
\newcommand{\Aut}{\textup{Aut}}
\newcommand{\id}{\textup{id}}
\newcommand{\SAut}{\textup{SAut}}
\def\A{\mathcal A}
\begin{document}

\title[Automorphism groups of Koras-Russell threefolds]{Automorphism groups of Koras-Russell threefolds of the first kind}

\author{L. Moser-Jauslin}
\address{Universite de Bourgogne ,
Institut de Math\'ematiques de Bourgogne CNRS -- UMR 5584,
9~avenue Alain Savary - B.P. 47 870, 21078 Dijon, France}
\email{lucy.moser-jauslin@u-bourgogne.fr }
 \thanks{The author is a member of the ANR program COMPLEXE (ANR 08-JCJC- 0130-01)}
\ 
\def\A{{\mathcal A}}

\begin{abstract}
 Koras-Russell threefolds are certain smooth contractible complex hypersurfaces in $\ba^4$ which are not algebraically isomorphic to $\ba^3$. One of the important examples is the cubic Russell threefold, defined by the equation  $x^2y+z^2+t^3+x=0$. In \cite{d-mj-p}, the automorphism group of the Russell cubic threefold was studied. It was shown, in particular, that all automorphisms of this hypersurface extend to automorphisms of the ambient space. This has several interesting consequences, including the fact that one can find  another hypersurface which is isomorphic to the Russell cubic, but such that the two hypersurfaces are inequivalent. In the present article, we will discuss how some of these results can be generalized to the class of Koras-Russell threefolds of the first kind.\end{abstract}

\maketitle

\section{Introduction}

In this article, we will study the automorphism groups of certain Koras-Russell threefolds. These varieties were first introduced by Koras and Russell when they were proving their remarkable result that all algebraic actions of $\C^*$ on affine three-space are linearizable \cite{K-R} (see also \cite{K-K-ML-R}). In order to prove this, they studied hyperbolic $\C^*$-actions on more general smooth contractible threefolds. This led them to 
introduce a set of threefolds which are smooth affine and contractible, however not isomorphic to $\ba^3$. These varieties are known as Koras-Russell threefolds.  One of the families of these varieties is given by hypersurfaces in $\ba^4_\C=\ba^4$ of the following form.  Let $d\ge 2$, and let $2\le k<l$ with $k$ and $l$ relatively prime. We denote by $X_{d,k,l}$ the zero set of $P_{d,k,l}={x^dy+z^k+t^l+x}$. That is, we have that $X_{d,k,l}=V(P_{d,k,l})$, the zero set of $P_{d,k,l}$.  We will call these Koras-Russell threefolds of the first kind. The case where $d=k=2$ and $l=3$ is known as the Russell cubic. In order to prove the linearizability of $\C^*$-actions on $\ba^3$, it was necessary to show that none of the Koras-Russell threefolds are isomorphic to $\ba^3$. For some of these varieties, this could be done with geometric invariants. However, for the Russell cubic, this problem was quite difficult. It was finally shown to be distinct from $\ba^3$ using the Makar-Limanov invariant \cite{ML}. This  tool has since become a very important and useful tool to study affine algebraic varieties.

It was shown by Makar-Limanov and Kaliman \cite{K-ML1997} that all of the Koras-Russell threefolds have non-trivial Makar-Limanov invariant, and are therefore not isomorphic to $\ba^3$. The  varieties of the first family have Makar-Limanov invariant isomorphic to a polynomial ring in one variable. These are the varieties we look at in this article.

Many authors, in studying exotic structures, are particularly interested in varieties with trivial Makar-Limanov invariant. However, here, and in several previous works, we take just the opposite approach. The fact that the Makar-Limanov invariant is not trivial gives strong restrictions on the automorphism group, which allows us to prove several interesting properties of these varieties. 

Consider  the variety $X_{2,2,3}$. As stated before, it is  known as the Russell cubic threefold.
In \cite{d-mj-p}, in collaboration with A. Dubouloz and P.M. Poloni, we studied the automorphism group of the Russell cubic. (In that article, it was called the Koras-Russell cubic threefold.) We showed in particular, that when the Russell cubic is embedded into $\ba^4$ as a hypersurface defined by $P=x^2y+z^2+t^3+x$, then all automorphisms extend to automorphisms of $\ba^4$. This leads to several interesting consequences. First of all, we gave an example of a polynomial $Q$ of four variables whose zero set is isomorphic to the Russell cubic, but such that not all automorphisms extend to automorphisms of $\ba^4$. In other words, we find inequivalent embeddings of the Russell threefolds in $\ba^4$. Also, we showed that the Russell threefold has a particular point (the origin of $\ba^4$) which is fixed by all automorphisms. 

Our goal here is to show that most of these results hold for all Koras-Russell threefolds of the first kind. Some of the arguments from \cite{d-mj-p} carry over with few changes, however, in order to study the automorphism group in the generalized setting, we need another result concerning the construction of certain automorphisms (see lemma \ref{hamilton}). 

In this article, we will review some of the  main ideas of the previous results, and we will show how they can be generalized for this class of Koras-Russell threefolds.

{\bf Acknowledgements :}{ I would like to thank David Wright for helpful conversations concerning  lemma {\ref{hamilton}}.

\section{The Makar-Limanov invariant and the Derksen invariant}

In this section, we will  give the definition of the Makar Limanov invariant for an affine variety, and describe the result which is the starting point of our study.

 Let $X$ be an affine irreducible complex variety with coordinate ring $\C[X]$. A locally nilpotent derivation $\der$ on $\C[X]$ is a $\C$-derivation of the algebra $\C[X]$ such that for any $f\in\C[X]$, there exists $n\in\N$ with $\der^n(f)=0$. We write $\der\in \LND(\C[X])$. It is well-known that there is a bijective correspondence between LND's on $\C[X]$ and algebraic actions of the additive complex group $\ga$ on $\C[X]$. 
The action corresponding to $\der$ is given by the map $\exp(t\der)$ from $\C[X]$ to $\C[X]\otimes\C[t]$. This, in turn, corresponds to an algebraic $\ga$-action on $X$. An element $P\in\C[X]$ is in the kernel of an LND, if and only if the hypersurfaces of $X$ defined $P=$ a constant are all stable by the corresponding action.  

\begin{definition}The {\it Makar-Limanov invariant} $\ML(\C[X])$ is the subring of $\C[X]$ defined as the intersection of the kernels of all locally nilpotent derivations on $\C[X]$.  One says that the Makar-Limanov invariant is {\it trivial} if it is equal to $\C$. The {\it Derksen invariant} is the subring of $\C[X]$ generated by all the kernels of LND's on $\C[X]$. One says that the Derksen invariant is {\it trivial} if it is equal to the ring $\C[X]$.
\end{definition}

It is clear, for example, that $\C[x,y,z]$ has a trivial Makar-Limanov invariant and trivial Derksen invariant, because  the intersection of the kernels of the three locally nilpotent derivations $\der_x=
\der/\der x$, $\der_y=
\der/\der y$ and $\der_z=
\der/\der z$ is already $\C$, and the ring generated by these three kernels is $\C[x,y,z]$. 

We define now the Koras-Russell threefolds of the first type. Given $d\ge 2$, and  $2\le k<l$ with $k$ and $l$ relatively prime, we denote by $X_{d,k,l}$ the zero set of $P_{d,k,l}={x^dy+z^k+t^l+x}$. That is, we have that $X_{d,k,l}=V(P_{d,k,l})$.

For any such variety, one can consider $\C[X_{d,k,l}]$ as a subring of $\C[x,z,t,x^{-1}]$, where $y=-(z^k+t^l+x)/x^d$. 

It was shown in \cite{K-ML1997} that the Makar-Limanov invariant is given by $\C[x]$. In fact, one can show also that   the Derksen invariant is given by $\C[x,z,t]$  (see \cite{K-ML}, the proof of theorem 9.1 and example 9.1).  Any automorphism of $X_{d,k,l}$ must preserve the two invariants. This allows us to describe the automorphism groups in terms of certain subgroups of the group of automorphisms of $\C[x,z,t]$. This will be described in detail in  section \ref{sec:aut}.

\section {The special automorphism group of $\C[x]/(x^d)[z,t]$}

Before studying the automorphism group of the Koras-Russell threefolds, we look at some properties of a related group of automorphisms.

Let $R=\C[x]$. Consider the Poisson bracket on $R[z,t]$ given by $\{f,g\}=f_zg_t-g_zf_t$. (Here, $f_z=\der f/\der z$ and $f_t=\der f/\der t$ for any $f\in R[z,t]$.) The ring $R[z,t]$ endowed with the Poisson bracket is a Lie algebra, and, in particular, the Poisson bracket satisfies the Jacobian identity.

For any $H\in R[z,t]$, we can define a derivation $D_H$ on $R[z,t]$ by defining $D_H(f)=\{H,f\}=H_zf_t-H_tf_z$.  Then the map $\widehat{\varphi_{wH}}=\exp(wD_H)$ is a well defined automorphism of $R[z,t][[w]]$ which fixes all elements of $R$ and fixes $w$. Its inverse is $\widehat{\varphi_{-wH}}$.  Note that $wD_H$ is a derivation, but not, in general, locally nilpotent. Therefore $\widehat{\varphi_{wH}}$ does not give an automorphism of the polynomial ring $\C[x,z,t,w]$. However, over the formal power series in $w$, we do get an automorphism. The determinant of  the Jacobian is therefore an invertible function in $R[z,t][[w]]$. We can even say more.

\begin{lemma}\label{hamilton} For any $H\in\C[x,z,t]$. Then the determinant of the Jacobian of $\widehat{\varphi_{wH}}$ is 1. 
\end{lemma}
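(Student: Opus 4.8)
The plan is to recognise the Jacobian determinant as a Poisson bracket and then to exploit the fact that $\widehat{\varphi_{wH}}$ is the exponential of the Hamiltonian derivation $D_H=\{H,-\}$. Since $\widehat{\varphi_{wH}}$ fixes every element of $R=\C[x]$ as well as $w$, it is determined by the images $Z:=\widehat{\varphi_{wH}}(z)$ and $T:=\widehat{\varphi_{wH}}(t)$, and the only nontrivial part of its Jacobian is the $2\times2$ block $\left(\begin{smallmatrix} Z_z & Z_t \\ T_z & T_t \end{smallmatrix}\right)$ in the variables $z,t$. Hence the Jacobian determinant equals $Z_zT_t-Z_tT_z$, which is exactly the Poisson bracket $\{Z,T\}$. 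So the lemma is equivalent to the identity $\{Z,T\}=1$ in $R[z,t][[w]]$.

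The key step is that $\widehat{\varphi_{wH}}=\exp(wD_H)$ is an automorphism of the Poisson algebra $\bigl(R[z,t][[w]],\{\,,\,\}\bigr)$, meaning $\widehat{\varphi_{wH}}\{f,g\}=\{\widehat{\varphi_{wH}}(f),\widehat{\varphi_{wH}}(g)\}$ for all $f,g$. This rests on the fact, already recorded in the text, that the bracket satisfies the Jacobi identity: written in the form $D_H\{f,g\}=\{D_Hf,g\}+\{f,D_Hg\}$, it says precisely that $D_H$ is a derivation of the Lie algebra $\bigl(R[z,t],\{\,,\,\}\bigr)$. The exponential of a derivation of a Lie algebra is a Lie-algebra automorphism; concretely, comparing coefficients of $w^n$ reduces the claim to the generalised Leibniz rule $D_H^n\{f,g\}=\sum_{i+j=n}\binom{n}{i}\{D_H^if,D_H^jg\}$, which follows by induction on $n$ from the degree-one case.

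Granting this, the lemma is immediate: applying the Poisson automorphism to $f=z$, $g=t$ gives $\{Z,T\}=\widehat{\varphi_{wH}}(\{z,t\})$, and since $\{z,t\}=1$ and $\widehat{\varphi_{wH}}$ fixes the constants, we conclude $\{Z,T\}=1$, as desired.

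I expect the main obstacle to lie entirely in making the second step rigorous in the formal setting, where $wD_H$ is not locally nilpotent and $\widehat{\varphi_{wH}}$ only exists over $R[z,t][[w]]$. One must check that applying $\widehat{\varphi_{wH}}$ term by term, differentiating in $w$, and rearranging the resulting series are all legitimate $w$-adically; equivalently, one verifies the identity coefficient by coefficient in $w$, so that only finite sums occur at each order. Alternatively, one can bypass the general Lie-theoretic statement and argue directly with $J:=\{Z,T\}$: using $\tfrac{d}{dw}Z=D_H(Z)$, $\tfrac{d}{dw}T=D_H(T)$, the Leibniz rule for $\tfrac{d}{dw}$ against the bracket, and the Jacobi identity, one finds $\tfrac{d}{dw}J=\{H,J\}$; together with $J|_{w=0}=1$ this forces $J=1$ via the recursion $(n+1)J_{n+1}=\{H,J_n\}$ on the coefficients of $J=\sum_n J_nw^n$.
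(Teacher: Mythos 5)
Your proposal is correct and follows essentially the same route as the paper: both identify the Jacobian determinant with the bracket $\{\varphi(z),\varphi(t)\}$ and use the Jacobi identity (i.e.\ that $D_H$ is a Lie derivation) to show, coefficient by coefficient in $w$, that this bracket equals $\{z,t\}=1$. Your recursion $(n+1)J_{n+1}=\{H,J_n\}$ is exactly the paper's induction $D(\Sigma_n)=\Sigma_{n+1}$ with $\Sigma_0=1$, just packaged in the language of Poisson automorphisms.
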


\begin{remark} The automorphism $\widehat{\varphi_H}$ is a formal version of a Hamiltonian flow for the Hamiltonian $H$, and the lemma simply expresses the fact that Hamiltonian flows preserve volume.

This result has been proven in a more general setting in \cite{Now-1}, Corollary 2.4.9 and \cite{Now-2}, Corollary 2.4. We will give the main argument here. 
\end{remark}
\begin{proof} To simplify the notation, we denote $\varphi=\widehat{\varphi_{wH}}$, and $D=D_H$. We have that the determinant of the Jacobian is given by $det(Jac)=\{\varphi(z),\varphi(t)\}$. Now we calculate this value :

\begin{align*} 
det(Jac)&=\sum_{l=0}^\infty\frac{ w^l}{l!}D^l(z)_z\sum_{m=0}^\infty \frac{w^m}{m!}D^m(t)_t-\sum_{l=0}^\infty \frac{w^l}{l!}D^l(z)_t\sum_{m=0}^\infty \frac{w^m}{m!}D^m(t)_z\\
&=\sum_{n=0}^\infty \frac{w^n}{n!}\sum_{k=0}^n\binom{n}{k}((D^kz)_z(D^{n-k}t)_t-(D^kz)_t(D^{n-k}t)_z)\\
&=\sum_{n=0}^\infty \frac{w^n}{n!}\sum_{k=0}^n\binom{n}{k}\{(D^kz),(D^{n-k}t)\})\\
\end{align*}

Now let $\Sigma_n=\sum_{k=0}^n\binom{n}{k}\{(D^kz),(D^{n-k}t)\})$. By using the Jacobian identity and the definition of the derivation $D$, one finds that $D(\Sigma_n)=\Sigma_{n+1}$. Also, $\Sigma_0=1$. We therefore find by induction that $\Sigma_n=0$ if $n\ge 1$, and therefore $det(Jac)=1$.
 
\end{proof}

Throughout this article, we fix an integer $d\ge 2$. Denote by $\overline{R}$ the quotient ring $R/(x^d)$.   We call an automorphism of $\overline{R}[z,t]$ which fixes $\overline{R}$ {\it special} if the determinant of its Jacobian is a non-zero constant.  We call $\SAut_{\overline R}\overline R[z,t]$ the group of special automorphisms. We will use  lemma \ref{hamilton} to determine a set of generators of the subgroup of special  automorphisms of $\overline R[z,t]$ which are congruent to the identity modulo $(x)$.

For any $H\in \C[x,z,t]$, $\widehat{\varphi_{wH}}$ induces an automorphism $\overline{\varphi}_{xH}^d=\overline{\varphi}_{xH}$ of 
 $\overline R[z,t]\cong \C[x,z,t][[w]]/(w-x,w^d)$. In other words, $\overline{
\varphi}_{xH}$ is the exponential of the locally nilpotent derivation $xD_H$ of $\overline{R}[z,t]$. By the lemma, we have that $\overline{\varphi}_{xH}$ is special with Jacobian determinant equal to 1.

For $j=1,\ldots,d$, we define :
\begin{align*}
&{\A}_j=\{ \varphi\in \Aut_{ R} R[z,t] \mid \varphi\equiv \id \mod (x^j)\}\\
&\overline{\A}^d_j=\overline{\A}_j=\{ \overline\varphi\in \SAut_{\overline R}\overline R[z,t] \mid \overline\varphi\equiv \id \mod (x^j)\}.
\end{align*}

\begin{note} Throughout this article, the integer $d$ remains fixed. When we consider automorphisms  of $\A_j$ modulo $(x^d)$, we simplify the notation of $\overline\A_j^d$ to $\overline\A_j$. There will be one proof (for corollary \ref{fixed-point} ) where will consider $\overline\A_j^d$ and $\overline\A_j^{d+1}$, and we will need to distinguish between the two different cut-off points of the series in $x$. Thus, for that case, we will specify the two different notations.
\end{note}

It is clear that $\overline{\A}_d=\{id\}$, and that $\overline{\A}_{j+1}$ is a normal subgroup of $\overline{\A}_j$. 

  It was shown in \cite{E-M-V} that the natural map $\pi=\pi_d: \Aut_R[z,t]\to \SAut_{\overline R}\overline R[z,t]$ given by restriction modulo $(x^d)$ is surjective.
 In other words, any special automorphism modulo $(x^d)$ lifts to an automorphism of the polynomial ring $R[z,t]$. 
 From this result, one sees that the group $\A_1/\A_d\cong \overline{\A}_1$ and that for $j=1,\ldots, d-1$, we have that $\A_j/\A_{j+1}\cong\overline\A_j/\overline\A_{j+1}$. The automorphisms of $\A_1$ have Jacobian determinant equal to 1.

\begin{prop}\label{aut3space} For $j=1,\ldots,d-1$, we have that $\overline{\A}_j/\overline{\A}_{j+1}\cong {\A}_j/{\A}_{j+1}$ is congruent to the additive group of the ideal $(z,t)\in\C[z,t]$.
\end{prop}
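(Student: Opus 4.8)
The plan is to study the ``leading term'' in $x$ of an element of $\overline{\A}_j$. Since the isomorphism $\A_j/\A_{j+1}\cong\overline{\A}_j/\overline{\A}_{j+1}$ is already recorded in the excerpt, I only need to identify $\overline{\A}_j/\overline{\A}_{j+1}$ with the additive group of $(z,t)$. Fix $1\le j\le d-1$. Any $\overline\varphi\in\overline{\A}_j$ satisfies $\overline\varphi\equiv\id\bmod(x^j)$, so I can write
\[
\overline\varphi(z)\equiv z+x^j a\quad\text{and}\quad \overline\varphi(t)\equiv t+x^j b \pmod{x^{j+1}},
\]
with $a,b\in\C[z,t]$. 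First I would check that $\Phi(\overline\varphi):=(a,b)$ defines a homomorphism $\overline{\A}_j\to\C[z,t]^2$ onto the additive group: when two such automorphisms are composed, every further substitution introduces an extra factor of $x^j$, and $x^{2j}\equiv 0\bmod(x^{j+1})$ for $j\ge 1$, so the leading corrections simply add. Its kernel is exactly the set of $\overline\varphi$ with $\overline\varphi\equiv\id\bmod(x^{j+1})$, i.e. $\overline{\A}_{j+1}$, so $\Phi$ descends to an injection $\overline{\A}_j/\overline{\A}_{j+1}\hookrightarrow\C[z,t]^2$.

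Next I would pin down the image. Each $\overline\varphi\in\overline{\A}_j$ is special, and being congruent to the identity it has Jacobian determinant $1$; expanding $\{\overline\varphi(z),\overline\varphi(t)\}$ modulo $(x^{j+1})$ gives $1+x^j(a_z+b_t)$, once more using $x^{2j}\equiv 0$. Hence $a_z+b_t=0$, so the image lies in the subgroup of divergence-free pairs $(a,b)$.

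For the reverse inclusion I would use the Hamiltonian construction. Given a divergence-free pair $(a,b)$, the polynomial Poincar\'e lemma produces a unique $H\in(z,t)\subset\C[z,t]$ with $a=-H_t$ and $b=H_z$. Setting $H'=x^{j-1}H\in\C[x,z,t]$, one has $x D_{H'}=x^j D_H$, so $\overline{\varphi}_{xH'}=\exp(x^j D_H)$ is a well-defined locally nilpotent exponential, hence an element of $\SAut_{\overline R}\overline R[z,t]$; by lemma~\ref{hamilton} its Jacobian determinant is $1$, and since the derivation carries a factor $x^j$ it lies in $\overline{\A}_j$. Its leading term is precisely $(-H_t,H_z)=(a,b)$, so every divergence-free pair is realized. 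Finally $H\mapsto(-H_t,H_z)$ is a bijection from $(z,t)$ onto the divergence-free pairs (surjectivity is Poincar\'e, and the kernel on $\C[z,t]$ consists of the constants, which meet $(z,t)$ only in $0$). Composing, $\overline\varphi\mapsto H$ gives the isomorphism $\overline{\A}_j/\overline{\A}_{j+1}\cong(z,t)$.

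The one genuinely substantial step is the surjectivity: producing an honest \emph{special} automorphism with a prescribed divergence-free leading term. This is exactly where lemma~\ref{hamilton} is needed, since the divergence-free condition by itself does not guarantee that $\exp(x^j D_H)$ has constant Jacobian determinant; it is the volume-preservation of the formal Hamiltonian flow that certifies $\overline{\varphi}_{xH'}\in\SAut_{\overline R}\overline R[z,t]$. By contrast, the containment of the image in the divergence-free pairs is elementary linear algebra on the Jacobian, and the identification of divergence-free pairs with $(z,t)$ is the polynomial Poincar\'e lemma.
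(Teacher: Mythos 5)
Your proposal is correct and takes essentially the same approach as the paper: both identify an element of $\overline{\A}_j$ by its degree-$x^j$ coefficients, use the Jacobian-determinant-one condition to put those coefficients in Hamiltonian form (your divergence-free plus Poincar\'e-lemma step is exactly what the paper compresses into ``it is easy to see that there exists $h\in(z,t)$''), and obtain surjectivity from lemma \ref{hamilton} applied to $\exp(x^jD_h)$. The only differences are expository: you route the map through $\C[z,t]^2$ and spell out the homomorphism and kernel verifications that the paper leaves implicit.
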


\begin{proof}
For $j=1,\ldots, d-1$, we will show that $\overline{\A}_j/\overline{\A}_{j+1}$ is congruent to the additive group given by the 
ideal $(z,t)\subset \C[z,t]$. To do this, we construct a surjective group homomorphism $\Phi_j :\overline{\A}_j\to (z,t)\subset\C[z,t]$ whose kernel is $\overline{\A}_{j+1}$. If $\overline\varphi\in\overline{\A}_j$ then, by using the condition that the jacobian determinant of $\overline{\varphi}$ is one, it is easy to see that there exists $h\in (z,t)\subset\C[z,t]$ such that $\overline\varphi(z)\equiv z+ h_tx^j \mod 
(x^{j+1})$ and $\overline\varphi(t)\equiv t-h_tx^j\mod ( x^{j+1})$. We define the map  $\Phi_j$ from $\overline{\A}_j/\overline{\A}_{j
+1}$ to $(z,t)$ by $\Phi_j(\overline\varphi)=h$. We now find a preimage for any $h\in(z,t)$ to show that $\Phi_j$ is surjective. We consider   $\overline\varphi_{x^jh}=\exp(x^jD_h)$ the corresponding automorphism of $\overline R[z,t]$. We have that $\Phi_j(\overline\varphi_{x^jh})=h$, and by 
lemma \ref{hamilton}, the Jacobian  determinant is 1, and therefore, we have found a preimage of $h$ in $\overline{\A}_j$.

\end{proof}

In particular, we have shown the following. For any $h\in (z,t)$ and any $j=1,\ldots, d-1$, choose any automorphism $\varphi_{x^jh}$ which restricts to $\overline\varphi_{x^jh}$ modulo $(x^d)$. (The choice is not unique.) 

\begin{cor}  Let $\A_1$ and $\overline\A_1$ be defined as above. Then :
\begin{itemize}
\item $\overline{\A}_1$ is generated by  the set $\{ \overline\varphi_{x^jh} \mid j\in\{1,\ldots,d-1\} ; h\in (z,t)\}$; and 
\item ${\A}_1$ is generated by ${\A_d}$ and the set $\{ \varphi_{x^jh} \mid j\in\{1,\ldots,d-1\} ; h\in (z,t)\}$. 
\end{itemize}
\end{cor}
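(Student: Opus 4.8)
The plan is to prove the first statement by a descending induction along the finite filtration $\overline{\A}_1 \supset \overline{\A}_2 \supset \cdots \supset \overline{\A}_d = \{\id\}$, and then to deduce the second statement from the surjectivity of $\pi_d$. The key input is Proposition~\ref{aut3space}: for each $j \in \{1,\ldots,d-1\}$ the homomorphism $\Phi_j \colon \overline{\A}_j \to (z,t)$ (obtained by composing the isomorphism $\overline{\A}_j/\overline{\A}_{j+1}\cong (z,t)$ with the quotient map) has kernel $\overline{\A}_{j+1}$ and satisfies $\Phi_j(\overline{\varphi}_{x^j h}) = h$ for every $h \in (z,t)$. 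Recall also that each $\overline{\varphi}_{x^jh}=\exp(x^jD_h)$ indeed lies in $\overline{\A}_j$ and is special with Jacobian determinant $1$ by Lemma~\ref{hamilton}.

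For the first bullet, I would take an arbitrary $\overline{\varphi} \in \overline{\A}_1$ and construct the required factorization one level at a time. Set $\overline{\psi}_1 = \overline{\varphi}$ and $h_1 = \Phi_1(\overline{\psi}_1) \in (z,t)$. Since $\Phi_1$ is a homomorphism into the additive group $(z,t)$ and $\Phi_1(\overline{\varphi}_{x h_1}) = h_1$, the product $\overline{\varphi}_{x h_1}^{-1}\overline{\psi}_1$ lies in $\ker \Phi_1 = \overline{\A}_2$; call it $\overline{\psi}_2$. Repeating with $h_2 = \Phi_2(\overline{\psi}_2)$ produces $\overline{\psi}_3 = \overline{\varphi}_{x^2 h_2}^{-1}\overline{\psi}_2 \in \overline{\A}_3$, and so on. After $d-1$ steps one reaches $\overline{\psi}_d \in \overline{\A}_d = \{\id\}$, which unwinds to
\begin{equation*}
\overline{\varphi} = \overline{\varphi}_{x h_1}\,\overline{\varphi}_{x^2 h_2}\cdots \overline{\varphi}_{x^{d-1} h_{d-1}}.
\end{equation*}
This exhibits $\overline{\varphi}$ as a product of the proposed generators, so $\overline{\A}_1$ is generated by $\{ \overline{\varphi}_{x^j h} \mid j\in\{1,\ldots,d-1\};\, h\in(z,t)\}$.

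For the second bullet, I would use that $\pi = \pi_d \colon \Aut_R R[z,t] \to \SAut_{\overline R}\overline R[z,t]$ is surjective with kernel $\A_d$, by \cite{E-M-V}. Given $\varphi \in \A_1$, its image $\overline{\varphi}=\pi(\varphi)$ lies in $\overline{\A}_1$, so by the first part $\overline{\varphi} = \prod_{j=1}^{d-1}\overline{\varphi}_{x^j h_j}$. Since each chosen lift satisfies $\pi(\varphi_{x^j h_j}) = \overline{\varphi}_{x^j h_j}$, the element $a = \bigl(\prod_{j=1}^{d-1}\varphi_{x^j h_j}\bigr)^{-1}\varphi$ maps to the identity under $\pi$, hence $a \in \ker\pi = \A_d$. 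Therefore $\varphi = \bigl(\prod_{j=1}^{d-1}\varphi_{x^j h_j}\bigr)\, a$ with $a \in \A_d$, proving that $\A_1$ is generated by $\A_d$ together with the lifts $\varphi_{x^j h}$.

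The argument is essentially bookkeeping once Proposition~\ref{aut3space} and Lemma~\ref{hamilton} are in hand; the only genuine subtlety is that the telescoping terminates because the filtration has finite length $d$, so that $\overline{\A}_d = \{\id\}$, and that one may peel off exactly one generator $\overline{\varphi}_{x^j h_j}$ at each level precisely because $\Phi_j$ is a homomorphism whose value on that generator is $h_j$. For the second bullet the one external ingredient I rely on is the surjectivity of $\pi_d$ with kernel $\A_d$, which simultaneously guarantees that the lifts $\varphi_{x^j h}$ exist and that the correction term $a$ lands in $\A_d$.
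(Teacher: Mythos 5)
Your proposal is correct and follows essentially the same route the paper intends: the paper states this corollary as an immediate consequence of Proposition \ref{aut3space}, and your telescoping descent through the filtration $\overline{\A}_1\supset\cdots\supset\overline{\A}_d=\{\id\}$ (peeling off one generator per level via $\Phi_j$), combined with the surjectivity of $\pi_d$ with kernel $\A_d$ from \cite{E-M-V} for the second bullet, is exactly the implicit argument. There are no gaps; you have simply written out the bookkeeping the paper leaves to the reader.
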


 Instead of $\varphi_{x^jh}$ one could also choose any other automorphism which is equivalent to it modulo $(x^{j+1})$.

\section{The automorphism group of the Koras-Russell threefolds}\label{sec:aut}

Let $X=X_{d,k,l}$ be one of the Koras-Russell threefolds defined above. We will study the automorphism group $\A$ of $\C[X]$. 

As was done in \cite{d-mj-p}, using the Makar-Limanov invariant and the Derksen invariant, we see that any automorphism $\tilde\varphi$ of $\C[X]$ restricts to an automorphism  $\varphi$ of $ \C[x,z,t]$ which stabilizes the ideal $(x)$ and the ideal $I=(x^d,z^k+t^l+x)$. Also, any automorphism of this form of $\C[x,z,t]$ lifts to a unique automorphism of $\C[X]$. Thus the automorphism group of $\C[X]$ is isomorphic to the subgroup of the automorphism group of $\C[x,z,t]$ of those automorphims that stabilize $(x)$ and $I$. Using the $\C^*$-action on $X$, we can show that $\A$  is isomorphic to the semi-direct product $\A_1(d;z^k+t^l+x)\rtimes\C^*$, where $\A_1(d;z^k+t^l+x)$ is the subgroup of $R$-automorphisms of $R[z,t]$  which are congruent to the identity modulo $(x)$,  and which stabilize $I$. Thus, to study the automorphism group, we must determine $\A_1(d;z^2+t^3+x)$. 
\[ \A_1(d;z^k+t^l+x)=\{\varphi\in \Aut_{R}R[z,t] \mid \varphi(I)=I\}
\]

\begin{term} If $\tilde\varphi$ is an automorphism of $\C[X]$ which restricts to an automorphism $\varphi$ of $\C[x,z,t]$, we say that {\it $\varphi$ lifts to the automorphism $\tilde\varphi$}. From what we have seen, an automorphism lifts if and only if $(x)$ and $I$ are preserved. Also, the lifting is unique. Throughout this article, given an automorphism $\varphi$ of $\C[x,z,t]$ which preserves the ideals $(x)$ and $I$, we will denote by $\tilde\varphi$ the unique lift to an automorphism of $\C[X]$. More precisely, if $\varphi(z^k+t^l+x)=a(z^k+t^l+x)+bx^d$ with $a, b\in\C[x,z,t]$, then we have that $\tilde\varphi(y)=ay-b$. The inverse ${\tilde\varphi}^{-1}$ is given by the lift $\widetilde{\varphi^{-1}}$ of the inverse of $\varphi$.
\end{term}

First, we consider a more general setting.
Throughout this section, we fix $r\in\C[x,z,t]$  such that the zero set of $r_0=r(0,z,t)$ is a connected curve in $\ba^2$, and $r_0$ has no multiple factors. For example, we can choose $r=z^k+t^l+x$ where $k$ and $l$ are relatively prime. 
We study certain subgroups of the automorphism group of affine three space.
Let $I_{d,r}$ be the ideal generated by $r$ and $x^d$.  Now we define the following subgroups of the automorphism group of $\C[x,z,t]$. 

\begin{align*}
\A_j(d;r)&=\{ \varphi\in \A_j \mid \varphi(I_{d,r})=I_{d,r}\}\\
\overline\A_j(r)&=\{ \varphi\in \overline\A_j \mid \varphi(r)\in(r)\}
\end{align*}

Note that $\A_d(d;r)=\A_d$. Also, note that $\pi^{-1(}\overline\A_j(r))=\A_j(d;r)$. From this, it is easy to see that $\A_1(d;r)/\A_d\cong\overline\A_1(r)$, and that  $\A_j(d;r)/\A_{j+1}(d;r)\cong \overline\A_j(r)\overline/\A_{j+1}(r)$ for $j=1,\ldots,d-1$.

\begin{prop} For $j=1,\ldots, d-1$, the group $$\A_j(d;r)/\A_{j+1}(d;r)\cong\overline \A_j(r)/\overline\A_{j+1}(r)$$ is isomorphic to the additive group $\C[z,t]$.
\end{prop}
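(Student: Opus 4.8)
The plan is to transport the problem, via the isomorphism $\Phi_j$ of Proposition \ref{aut3space}, into a concrete statement about the plane curve $C=V(r_0)$ with $r_0=r(0,z,t)$, and then to exhibit explicit preimages. Since the excerpt already records $\A_j(d;r)/\A_{j+1}(d;r)\cong\overline\A_j(r)/\overline\A_{j+1}(r)$, I would work entirely with the latter. Recall that $\Phi_j$ identifies $\overline\A_j/\overline\A_{j+1}$ with the additive group $(z,t)$ by sending $\overline\varphi$ to the $h\in(z,t)$ with $\overline\varphi(z)\equiv z+h_tx^j$ and $\overline\varphi(t)\equiv t-h_zx^j\pmod{x^{j+1}}$. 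Restricting $\Phi_j$ to the subgroup $\overline\A_j(r)$ realizes $\overline\A_j(r)/\overline\A_{j+1}(r)$ as $\Phi_j(\overline\A_j(r))\subseteq(z,t)$, so the entire task is to identify this image and show it is isomorphic, as an additive group, to $\C[z,t]$.

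First I would compute the constraint imposed by $\overline\varphi(r)\in(r)$. A Taylor expansion gives $\overline\varphi(r)\equiv r+\{r,h\}\,x^j\pmod{x^{j+1}}$ (legitimate since $j+1\le d$), and writing $\overline\varphi(r)=gr$ one checks $g\equiv 1\pmod{x^j}$, using that $\C[z,t]$ is a domain and $r_0\neq 0$; comparing the coefficient of $x^j$ then shows that $\overline\varphi(r)\in(r)$ forces $\{r_0,h\}\in(r_0)$ in $\C[z,t]$. The heart of the argument is then the purely two-dimensional lemma that $W:=\{h\in\C[z,t]\mid\{r_0,h\}\in(r_0)\}=\C\oplus(r_0)$. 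I would prove this geometrically: $\{r_0,h\}=(r_0)_zh_t-(r_0)_th_z$ is exactly the derivative of $h$ along the Hamiltonian vector field $D_{r_0}$, which is tangent to $C$ and nonvanishing at every smooth point of $C$ (its components are, up to sign, the partials of $r_0$, which cannot both vanish there). Hence $\{r_0,h\}\in(r_0)$, i.e. $\{r_0,h\}$ vanishes on $C$, says precisely that $h$ has vanishing tangential derivative along the smooth locus $C^{\mathrm{sm}}$; since $C$ is connected this makes $h$ constant on $C$, and since $r_0$ has no multiple factors the ideal $(r_0)$ is radical, so $h-c\in(r_0)$ for some $c\in\C$ by the Nullstellensatz. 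The reverse inclusions $\C\subseteq W$ and $(r_0)\subseteq W$ are immediate.

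Finally I would produce preimages, which simultaneously gives surjectivity and makes the group law transparent. For $u\in\C[z,t]$ set $\psi_u=\exp(x^jD_{ur})$; by Lemma \ref{hamilton} this is a special automorphism, and the crucial trick is to use the Hamiltonian $ur$ rather than $ur_0$: then $D_{ur}(r)=\{ur,r\}=r\{u,r\}\in(r)$, so $x^jD_{ur}$ preserves the ideal $(r)$ and therefore $\psi_u(r)\in(r)$, i.e. $\psi_u\in\overline\A_j(r)$. The assignment $u\mapsto[\psi_u]$ is a homomorphism $\C[z,t]\to\overline\A_j(r)/\overline\A_{j+1}(r)$, because the Baker--Campbell--Hausdorff corrections lie in $x^{2j}(\cdots)\subseteq(x^{j+1})$; it is injective, since $\psi_u\equiv\id\pmod{x^{j+1}}$ forces $(ur_0)_z=(ur_0)_t=0$, whence $ur_0\in\C$ and $u=0$; and it is surjective, since by the lemma every element of $\Phi_j(\overline\A_j(r))=W\cap(z,t)$ has the form $c+ur_0$, and a direct check shows $\Phi_j(\psi_u)$ equals this element up to sign. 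This yields $\overline\A_j(r)/\overline\A_{j+1}(r)\cong\C[z,t]$, and with the isomorphism quoted above the proposition follows. I expect the geometric lemma $W=\C\oplus(r_0)$ to be the main obstacle, as it is exactly where connectedness and reducedness of $r_0$ genuinely enter; the construction of automorphisms is then routine, the one real observation being that taking the Hamiltonian $ur$ (not $ur_0$) is what keeps the flow inside the stabilizer of $(r)$.
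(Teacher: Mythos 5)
Your proof is correct and follows essentially the same route as the paper's: both reduce the ideal-stability condition to $\{r_0,h\}\in(r_0)$, use connectedness and reducedness of the zero set of $r_0$ to conclude $h\in\C\oplus(r_0)$, and build preimages via $\exp(x^jD_{\alpha r})$ with Hamiltonian a multiple of $r$ rather than $r_0$ (the paper makes this exact point in a remark). The only differences are organizational: the paper packages the argument as a surjective homomorphism $\Psi_j:\overline\A_j(r)\to\C[z,t]$, $\overline\varphi\mapsto\alpha$ where $h_0=\alpha r_0$, with kernel $\overline\A_{j+1}(r)$, and cites Proposition 3.6 of \cite{d-mj-p} for the geometric lemma that you prove directly.
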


\begin{proof} We use the lemma \ref{hamilton} together with the idea of the proof of the first case, for the Russell cubic  given in \cite{d-mj-p}, to  construct a surjective homomorphism $\Psi_j$ from $\overline\A_{j}(r)$ to $\C[z,t]$ whose kernel is  $\overline\A_{j+1}(r)$. 

Let $\overline\varphi$ be an automorphism in $\overline\A_{j}(r)$. By calculating the Jacobian of the automorphism as before, we have that there exists $h_0\in\C[z,t]$ such that 
\begin{align*}
\overline\varphi(z)&\equiv z+x^j(h_0)_t \mod ( x^{j+1}) \\
\overline\varphi(t)&\equiv t-x^j(h_0)_z \mod ( x^{j+1}). \\
\end{align*}

Now $\overline\varphi(r)\equiv r+x^j\{r_0,h_0\} \mod ( x^{j+1})$, and since the ideal $(r)$ of $R[z,t]$ is stable, and $j-1< d$, we have that $\{r_0,h_0\}$ is in the ideal $(r_0)$ of $\C[z,t]$ generated by $r_0$. Now, using the fact that $r_0$ has no multiple factors and that the zero set of $r_0$ in $\ba^2$ is connected, this implies that, up to a constant, $h_0\in (r_0)$. This comes from the fact that, by the hypotheses on $r_0$, the restriction to the zero set of $r_0$ of the function defined by $h_0$ is  constant. We may therefore  assume that $h_0$ is in the ideal generated by $r_0$.  (see Proposition 3.6 of  \cite {d-mj-p}). If $h_0=\alpha r_0$, we define $\Psi_j(\overline\varphi)=\alpha$. This gives a homomorphism from $\overline\A_j(r)$ to the additive group $\C[z,t]$, whose kernel is exactly $\overline\A_{j+1}(r)$. Now we are left to show that $\Psi_j$ is surjective. For any $\alpha\in\C[z,t]$, we find a preimage in $\overline\A_j(r)$. Let $h=\alpha r$, and let $\overline\varphi$ be equal to $\overline\varphi_{x^jh}$. We have that $\overline\varphi\in\overline\A_j$, and since $r$ is a factor of $h$, we have that the ideal $\overline\varphi(r)\in(r)\subset R[z,t]$. Also, since $r\equiv r_0 \mod ( x)$, we have that $\Psi_j(\overline\varphi)=\alpha$. 

\end{proof}
As before, for any $h\in R[z,t]$ and any $j\in\{1,\ldots,d-1\}$, we  choose an arbitrary preimage $\varphi_{x^jh}\in\pi^{-1}(\overline\varphi_{x^jh})\subset \A_1$. Note that, since  $\pi^{-1}(\overline\A_j(r))=\A_j(d;r)$, we have that $\varphi_{x^jh}\in\A_j(d;r)$.

\begin{remark} Note the difference between the proof of this proposition and the proof of proposition \ref{aut3space}.
In the previous section, we  could choose generators of $\A_j/\A_{j+1}$ of the form $\varphi_{x^jh}$ where $h$ is independant of $x$.  Now, we need to find elements of $\A_j$ which preserve the ideal $(r)$. In order to do this, we choose $h$ in the ideal $(r)$. In particular, $h$ depends on $x$. If $\alpha\in\C[z,t]$, let $h=h(x,z,t)=\alpha r$  and $h_0=h_0(z,t)=\alpha r_0-h(0,0,0)$.  Then $h_0\in (z,t)\in\C[z,t]$, and $\varphi_{x^jh}$ is congruent to $\varphi_{x^jh_0}$ modulo $\A_{j+1}$.
\end{remark}

As an immediate consequence of the proposition, we have the following set of generators of $\A_1(d;r)$ :

\begin{cor} \label{generators} Suppose that the zero set of $r_0$ is connected, and that $r_0$ has no multiple factors. Then
\begin{itemize}
\item  the group $\overline\A_1(r)$ is generated by  the set of automorphisms of the form $\overline\varphi_{x\gamma r}$, for $\gamma\in \overline R[z,t]$;
\item  the group $\A_1(d;r)$ is generated by $\A_d$ and the set of  automorphisms of the form $\varphi_{x\gamma r}$, for $\gamma\in R[z,t]$.
\end{itemize}
\end{cor}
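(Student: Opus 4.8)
The corollary claims two generation statements. The first says $\overline{\mathcal A}_1(r)$ is generated by the automorphisms $\overline\varphi_{x\gamma r}$ as $\gamma$ ranges over $\overline R[z,t]$. The second says $\mathcal A_1(d;r)$ is generated by $\mathcal A_d$ together with the $\varphi_{x\gamma r}$. I want to sketch a proof that bootstraps these statements from the layer-by-layer structure established by the preceding proposition (that $\overline{\mathcal A}_j(r)/\overline{\mathcal A}_{j+1}(r)\cong\mathbb C[z,t]$ via $\Psi_j$), together with the generators $\overline\varphi_{x^jh}$ ($h=\alpha r$) found there.

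**Plan for the first bullet.** I would prove it by a downward/inductive filtration argument exploiting the finite chain $\overline{\mathcal A}_1(r)\supset\overline{\mathcal A}_2(r)\supset\cdots\supset\overline{\mathcal A}_d(r)=\{\mathrm{id}\}$. Let $G$ denote the subgroup of $\overline{\mathcal A}_1(r)$ generated by all $\overline\varphi_{x\gamma r}$ with $\gamma\in\overline R[z,t]$. The goal is $G=\overline{\mathcal A}_1(r)$. Take an arbitrary $\overline\varphi\in\overline{\mathcal A}_1(r)$. Using $\Psi_1$ from the proposition, there is $\alpha_1\in\mathbb C[z,t]$ with $\Psi_1(\overline\varphi)=\alpha_1$; the proposition exhibits $\overline\varphi_{x\alpha_1 r}$ as a preimage of $\alpha_1$, and this is one of our generators (take $\gamma=\alpha_1$, $j=1$). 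Then $\overline\varphi\cdot(\overline\varphi_{x\alpha_1 r})^{-1}$ lies in $\ker\Psi_1=\overline{\mathcal A}_2(r)$. Now iterate: at stage $j$, having pushed the element into $\overline{\mathcal A}_j(r)$, I use $\Psi_j$ to find $\alpha_j\in\mathbb C[z,t]$ and correct by a generator of the form $\overline\varphi_{x^j\alpha_j r}$. The one subtlety is that the natural layer-$j$ corrector is $\overline\varphi_{x^j\alpha_j r}$, whereas the bullet asks only for $\overline\varphi_{x\gamma r}$ (i.e. $j=1$ but arbitrary $\gamma\in\overline R[z,t]$). I would absorb the extra powers of $x$ into $\gamma$: the element $\overline\varphi_{x\cdot(x^{j-1}\alpha_j r/r)\,r}$ does not literally parse, so the cleaner route is to observe that $\overline\varphi_{x^j\alpha_j r}=\exp(x^jD_{\alpha_j r})=\exp\bigl(x\,D_{x^{j-1}\alpha_j r}\bigr)=\overline\varphi_{x\gamma r}$ with $\gamma=x^{j-1}\alpha_j$, which is indeed in $\overline R[z,t]$ and still a multiple of $r$ in the exponent as required. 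Hence every layer-$j$ corrector is already of the advertised form $\overline\varphi_{x\gamma r}$. Since the chain terminates at $\overline{\mathcal A}_d(r)=\{\mathrm{id}\}$ after finitely many steps, multiplying the correctors back together expresses $\overline\varphi$ as a product of generators, giving $\overline\varphi\in G$.

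**Plan for the second bullet.** This follows by lifting the first statement through the surjection $\pi$. Recall $\pi^{-1}(\overline{\mathcal A}_j(r))=\mathcal A_j(d;r)$ and $\ker\pi=\mathcal A_d$. Let $\varphi\in\mathcal A_1(d;r)$ be arbitrary. Its image $\pi(\varphi)\in\overline{\mathcal A}_1(r)$ is, by the first bullet, a product $\overline\varphi_{x\gamma_1 r}\cdots\overline\varphi_{x\gamma_m r}$. Choose the fixed preimages $\varphi_{x\gamma_i r}\in\pi^{-1}(\overline\varphi_{x\gamma_i r})\subset\mathcal A_1(d;r)$ (these exist by the surjectivity of $\pi$ and are exactly the automorphisms named in the remark preceding the corollary). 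Then $\varphi$ and the product $\varphi_{x\gamma_1 r}\cdots\varphi_{x\gamma_m r}$ have the same image under $\pi$, so their ratio lies in $\ker\pi=\mathcal A_d$. Therefore $\varphi$ is a product of elements $\varphi_{x\gamma_i r}$ and one element of $\mathcal A_d$, which is precisely the claimed generation by $\mathcal A_d$ together with the $\varphi_{x\gamma r}$.

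**Where the difficulty lies.** The inductive skeleton is routine once the proposition is in hand; the only genuine point requiring care is the reconciliation of the two indexings of the generators, namely showing that the layer-$j$ correctors $\overline\varphi_{x^j\alpha r}$ can be rewritten as $\overline\varphi_{x\gamma r}$ with $\gamma\in\overline R[z,t]$. The identity $x^jD_{\alpha r}=xD_{x^{j-1}\alpha r}$ (using $\mathbb C$-linearity of $H\mapsto D_H$ and the fact that scalar powers of $x$ commute past the derivation, since $x\in\overline R$ is fixed) makes this transparent, but it is the step one must state explicitly so that every generator used in the induction genuinely belongs to the family appearing in the corollary's statement; I would flag that the flexibility in the choice of $\gamma\in\overline R[z,t]$ (rather than only $\gamma\in\mathbb C[z,t]$) is exactly what permits collapsing the filtration to a single power of $x$.
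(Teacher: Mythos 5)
Your proposal is correct and follows essentially the argument the paper intends when it calls the corollary an ``immediate consequence'' of the proposition: peel off an arbitrary element of $\overline\A_1(r)$ layer by layer along the finite filtration $\overline\A_1(r)\supset\cdots\supset\overline\A_d(r)=\{\id\}$ using the homomorphisms $\Psi_j$ and their explicit preimages $\overline\varphi_{x^j\alpha r}$, then lift to $\A_1(d;r)$ through $\pi$ with kernel $\A_d$. Your explicit reconciliation $x^jD_{\alpha r}=xD_{x^{j-1}\alpha r}$, so that $\overline\varphi_{x^j\alpha r}=\overline\varphi_{x\gamma r}$ with $\gamma=x^{j-1}\alpha\in\overline R[z,t]$, is exactly the point the paper encodes by allowing $\gamma$ to depend on $x$ in the statement of the corollary.
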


\section{Extension of automorphisms to $\ba^4$}

\begin{term} Suppose $\tilde\varphi$ is an automorphism of $\C[X]$. We say that {\it $\Phi$ defines an extension of $\tilde\varphi$} if $\Phi$ is an automorphism of $\C[x,y,z,t]$ which preserves the ideal generated by $P=P_{d,k,l}$, and $\tilde\varphi$ is the induced automorphism of $\C[X]$. In geometric terms, this simply means that the automorphism of $X$ corrsesponding to $\tilde\varphi$ extends to an automorphism of $\ba^4$. This extension is not necessarily unique.
\end{term}

The hypersurfaces $X_{d,k,l}$ are embedded in affine four-space. In this section, we will show that all automorphisms of $X_{d,k,l}$ extend to automorphisms of the ambient four-space. This section uses a technique, developed in \cite{poloni}, used also in \cite{mj-p},  and was adapted for the case of the Russell cubic in \cite{d-mj-p}. The idea is simply to lift families of automorphisms. More precisely, we use the fact that we can construct automorphisms of all the fibers of the polynomial $P_{d,k,l}$ in a continuous way.

\begin{thm} All automorphisms of $X_{d,k,l}$ extend to automorphisms of $\ba^4$.
\end{thm}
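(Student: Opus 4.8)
The plan is to reduce the problem to constructing, in an explicit and compatible way, extensions of the generators of $\A_1(d;r)$ with $r=z^k+t^l+x$, exploiting the fact (established in the previous section) that $\A\cong \A_1(d;z^k+t^l+x)\rtimes\C^*$ and that $\A_1(d;z^k+t^l+x)$ is generated by $\A_d$ together with the elementary automorphisms $\varphi_{x\gamma r}$ for $\gamma\in R[z,t]$ (Corollary \ref{generators}). Since the extensions of a product of automorphisms is the composition of their extensions, and extensions are closed under composition, it suffices to extend each type of generator separately: the $\C^*$-factor, the elements of $\A_d$, and the Hamiltonian-type generators $\varphi_{x\gamma r}$. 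The $\C^*$-action extends trivially as a linear action on $\ba^4$ (it is the restriction of the weighted action that makes $P_{d,k,l}$ semi-invariant), so this case is immediate.

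First I would treat the generators $\varphi_{x\gamma r}$, which is where the technique of lifting families really does the work. The key observation is that $\overline\varphi_{x\gamma r}=\exp(x\,D_{\gamma r})$ is the truncation modulo $(x^d)$ of the formal Hamiltonian flow $\widehat\varphi_{w\gamma r}=\exp(wD_{\gamma r})$ from Lemma \ref{hamilton}, which has Jacobian determinant exactly $1$. The idea from \cite{poloni, mj-p, d-mj-p} is to view $P=x^dy+r$ and to construct, for each fixed value of $x$, a volume-preserving automorphism of the $(z,t)$-plane depending algebraically on the remaining coordinates, and then assemble these into a single automorphism of $\ba^4$ that preserves the ideal $(P)$. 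Concretely, I would set $\Phi$ to act by $x\mapsto x$, by the Hamiltonian flow $\exp(x^d D_{\gamma r})$ on $(z,t)$ — which is now genuinely locally nilpotent because the coefficient is $x^d$ rather than a formal parameter — and then determine the action on $y$ by solving $\Phi(P)\in (P)$. Since the flow preserves the symplectic form $\diff z\wedge\diff t$ and $r$ is transported in a controlled way, the required correction to $y$ is forced and polynomial; the unit Jacobian from Lemma \ref{hamilton} is exactly what guarantees that the map on $(z,t)$ is a polynomial automorphism with polynomial inverse rather than merely a formal one.

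Next I would handle $\A_d$, the automorphisms congruent to the identity modulo $(x^d)$. Here $\varphi$ has the form $z\mapsto z+x^d a$, $t\mapsto t+x^d b$ for $a,b\in R[z,t]$ (with the stabilization condition on $I$), and the point is that $P=x^dy+r$ changes by $x^d$ times something, so one can absorb the discrepancy entirely into the $y$-coordinate. I expect these to extend directly by the same solve-for-$y$ mechanism, now without needing the flow structure, since the factor $x^d$ multiplying $y$ matches the order of vanishing. The main obstacle, and the step I would spend the most care on, is verifying \emph{compatibility and well-definedness} of the assembled extension: one must check that the prescribed action on $y$ actually yields an automorphism of $\C[x,y,z,t]$ (in particular that the map is invertible, which is where invertibility of the Jacobian and the explicit inverse $\widehat\varphi_{-w\gamma r}$ from Lemma \ref{hamilton} are used) and that it genuinely preserves $(P)$ rather than merely $(P)$ modulo a higher power of $x$. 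Because the generators were only defined up to $\A_{j+1}$ and the lift $\varphi_{x\gamma r}$ was chosen non-canonically, I would want to confirm that the construction is insensitive to these choices, or equivalently that any ambiguity lies in $\A_d$ and is therefore separately extendable; this bookkeeping, rather than any single hard estimate, is the delicate part of the argument.
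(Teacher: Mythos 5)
Your overall skeleton matches the paper's: the extendable automorphisms form a group, so it suffices to treat the $\C^*$-factor, the elements of $\A_d$, and the generators $\varphi_{x\gamma r}$ of Corollary \ref{generators}; your treatment of $\C^*$ and of $\A_d$ (solve for $y$, using $\varphi(r)=r+bx^d$) is exactly the paper's and is correct. But the crux of the theorem is the generators $\varphi_{x\gamma r}$, and there your construction breaks down. The claim that $x^dD_{\gamma r}$ is ``genuinely locally nilpotent because the coefficient is $x^d$'' is false: since $D_{\gamma r}(x)=0$, one has $(x^jD_{\gamma r})^n=x^{jn}D_{\gamma r}^n$ in the domain $\C[x,z,t]$, so $x^jD_{\gamma r}$ is locally nilpotent if and only if $D_{\gamma r}$ is, and the Hamiltonian derivation $D_{\gamma r}$ is not locally nilpotent in general (already $D_r$ with $r=z^k+t^l+x$ raises degrees indefinitely). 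Local nilpotency holds only after truncation modulo $(x^d)$, i.e.\ over $\overline R[z,t]$ --- this is precisely why Lemma \ref{hamilton} is stated in $R[z,t][[w]]$ and yields only a \emph{formal} automorphism. So the map $\Phi$ you prescribe on $(z,t)$ is not a polynomial map. Worse, even as a formal map it is the wrong one: $\exp(x^dD_{\gamma r})\equiv\id \bmod (x^d)$, whereas the generator to be extended satisfies $\varphi_{x\gamma r}\equiv\exp(xD_{\gamma r}) \bmod (x^d)$ and moves $z,t$ already at order $x$; any extension $\Phi$ preserving $\C[x,z,t]$ must restrict there to $\varphi_{x\gamma r}$ itself, so your map would extend an element of (the closure of) $\A_d$, not $\varphi_{x\gamma r}$. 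Finally, the direct ``solve for $y$'' step cannot succeed here: $\varphi_{x\gamma r}(r)=ar+bx^d$ with $a$ not invertible, so $y\mapsto ay-b$ is only an endomorphism of $\C[x,y,z,t]$ --- this is exactly the obstacle the paper points out, and your proposal does not get around it.

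What is missing are the two ingredients the paper actually uses. First, the van den Essen--Maubach--V\'en\'ereau theorem \cite{E-M-V}: the special automorphism $\exp(xD_H)$ of $\overline R[\lambda][z,t]$, with $H=\gamma(r-\lambda)$ and Jacobian $1$ by Lemma \ref{hamilton}, lifts to an honest polynomial automorphism $F$ of $R[\lambda][z,t]$; this lifting theorem is what replaces your (false) local-nilpotency claim. Second, the family is parametrized by the \emph{value $\lambda$ of $P$}, not by $x$ as in your ``for each fixed value of $x$'' description: since $H$ is divisible by $r-\lambda$, the automorphism $F$ preserves $(r-\lambda,x^d)$ for every $\lambda$, hence lifts uniquely to an automorphism $\tilde F$ of $R[\lambda][y,z,t]/(P-\lambda)$ (writing $F(r-\lambda)=(r-\lambda)\alpha+x^d\beta$ and setting $\tilde F(y)=y\alpha-\beta$; invertibility is free because the lift of the inverse is the inverse). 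Substituting $\lambda=P$ via the isomorphism $R[\lambda,y,z,t]/(P-\lambda)\cong\C[x,y,z,t]$ then produces the ambient automorphism $\Phi$ preserving $(P)$. The point is that the corrections needed to turn the non-invertible endomorphism $y\mapsto ay-b$ into an automorphism are elements of the ideal $(P)$, and these are invisible if one works only on the single fiber $P=0$; the argument must be run simultaneously on all fibers $V(P-\lambda)$. Your closing remark --- that the ambiguity in the choice of $\varphi_{x\gamma r}$ lies in $\A_d$ and is harmless --- is correct, but it is not where the difficulty of the theorem lies.
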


\begin{proof}

We set $r=z^k+t^l+x$. First of all, the action of $\C^*$ extends to $\ba^4$, and thus we only must check the automorphisms lifted from elements in $\A_1(d;r)$. 

Now, if $\varphi\in\A_d$, then we can easily see that the automorphism $\tilde\varphi$  of $\C[X]$ extends to an automorphism of $\C[x,y,z,t]$. Indeed, if $\varphi(r)=r+bx^d$ with $b\in\C[x,z,t]$, then   $\tilde\varphi(y)=y-b$. We now give an automorphism $\Phi$ of $\C[x,y,z,t]$ which defines an extension of $\tilde\varphi$. We choose $\Phi|_{\C[x,z,t]}=\varphi$, and $\Phi(y)=y-b$.  This is an automorphism, because the image of $y$ differs from $y$ by a polynomial in the other variables.

Now, we use that $\A_1(d;r)$ is generated by $\A_d$ and automorphisms of the form $\varphi_{x\gamma r}$ where $\gamma$ is in $\C[x,z,t]$.  We must show that we can pick an extension for any  automorphism of the form $\tilde\varphi=\tilde\varphi_{x\gamma r}$ where $\varphi=\varphi_{x\gamma r}$. This is not at all obvious, because if we try to use a similar argument to the one above, we find that $\varphi(r)=ar+bx^d$, where $b\in\C[x,z,t]$ and $a\in\C[x,z,t]$. Then if we try to define $\Phi|_{\C[x,z,t]}=\varphi$, and $\Phi(y)=ay-b$, we have an endormorphism of $\C[x,y,z,t]$ which is not an  automorphism, since $a$ is not invertible in $\C[x,z,t]$.  We therefore must adapt this endomorphism by adding appropriate elements of the ideal $(P)\in\C[x,y,z,t]$ to construct an automorphism. This is done exactly as in the proof of  Lemma 4.2 of \cite{d-mj-p}.  We describe the process here.

Let $r_\lambda=r-\lambda$ for any $\lambda\in\C$. We will construct a family of automorphisms $\varphi_\lambda$ such that the following properties hold. 
\begin{enumerate}
\item[(1)] For each $\lambda\in\C$, $\varphi_\lambda\in\A_1(d;r-\lambda)$;
\item[(2)] For $\lambda=0$, $\varphi_0$ is congruent to $\varphi$ modulo $\A_d$; and
\item[(3)] The map $\C\to\A_1$ given by $\lambda\mapsto\varphi_\lambda$ is polynomial in $\lambda$.
\end{enumerate}

 Let $H=\gamma(r-\lambda)\in\C[\lambda,x,z,t]$, and let $D_{H}$  be the $\overline{R}[\lambda]$-derivation on $\overline{R}[\lambda][z,t]$ given by $D_{H}(g)=H_zg_t-H_tg_z$. We have that $xD_H$ is locally nilpotent as a derivation on $\overline{R}[\lambda][z,t]$. Let $\overline{F}=\exp({xD_H})$. It is a special automorphism in $\Aut_{\overline{R}[\lambda]}\overline{R}[\lambda][z,t]$, by lemma \ref{hamilton}, and, therefore, by the result of \cite{E-M-V}, there exists and automorphism $F\in \Aut_{R[\lambda]}R[\lambda,z,t]$ which restricts to $\overline F$ modulo $(x^d)$. For any $\lambda\in\C$, let $\varphi_\lambda(g)=F(g)\in R[\lambda,z,t]$ for any $g\in R[z,t]$. By construction, these automorphisms satisfy the three properties above.

 Now we construct $\Phi$. First, we show that $F$ extends to an automorphism $\tilde{F}$  of the ring
 $R[\lambda][y,z,t]/(P-\lambda)$, which contains $R[\lambda][z,t]$ as a subring. Note that, by construction, $\overline{F}(r-\lambda)$ belongs to the ideal $(r-\lambda)$, and therefore, $F(r-\lambda)$ is in the ideal $(r-\lambda,x^d)\subset R[\lambda][z,t]$. Suppose that $F(r-\lambda)=(r-\lambda)\alpha+ x^d\beta$ with $\alpha$ and $\beta$ elements of $R[\lambda][z,t]$. Then we extend the automorphism $F$ to an endomorphism $\tilde{F}$ of $R[\lambda][z,t,y]/(P-\lambda)$ by posing $\tilde{F}|_{R[\lambda][z,t]}=F$ and $\tilde{F}(y)=y\alpha -\beta$. (One can check  easily that  this makes sense, since $\tilde{F}(P-\lambda)=(P-\lambda)\alpha=0\in R[\lambda](z,t,y]/(P-\lambda)$.) Note that $\tilde{F}$ is the unique lifting of the automorphism  $F$  of $R[\lambda][z,t]$ to an endomorphism of $R[\lambda][y,z,t]/(P-\lambda)$.  Now we remark that $\tilde F$ is in fact an automorphism. To see this, note that if $G$ is the inverse of $F$, then $\tilde G$  is the inverse of $\tilde F$. 
 
Now the projection $R[\lambda,y,z,t]\to R[y,z,t]$ which sends $\lambda$ to $P$ induces an isomorphism between $R[\lambda,y,z,t]/(P-\lambda)$ and $R[y,z,t]=\C[x,y,z,t]$. Thus $\tilde{F}$ induces an automorphism $\Phi$ of $\C[x,y,z,t]$. By the construction above, $\Phi(P)=(P)$, and for any $\lambda\in\C$, we have that $\Phi$ induces the automorphism $\tilde\varphi_\lambda$ on $\C[x,y,z,t]/(P-\lambda)$.
 
 (Another way to see the construction is that we have simply replaced $\lambda$ by $P$ in the automorphism $\tilde F$. Thus, in the geometric setting, $\tilde\varphi_\lambda$ induces an automorphism $\phi_\lambda$ of $V(P-\lambda)\subset\ba^4$ for any $\lambda$, and the  map of $\ba^4$ induced by $\Phi$ restricts to  $\phi_\lambda$ on $V(P-\lambda)$ for any $\lambda$.)

\end{proof}

\section{The fixed point and some comments on inequivalent hypersurfaces}

In \cite{d-mj-p}, we showed that all automorphims of the Russell cubic threefold have a common fixed point, namely, the origin of $\ba^4$. We will now show that this result also holds for all Koras-Russell threefolds of the first kind. The proof is not the same as in \cite{d-mj-p}.

\begin{cor}\label{fixed-point} Any automorphism of $X=X_{d,k,l}$ fixes the origin.
\end{cor}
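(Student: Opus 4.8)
The plan is to show that any automorphism $\tilde\varphi$ of $\C[X]$ induces a linear map on the cotangent space at the origin that must fix the maximal ideal $\mathfrak{m}_0$ corresponding to the origin, and to leverage the generating set from Corollary \ref{generators} together with the semidirect product decomposition $\A\cong\A_1(d;z^k+t^l+x)\rtimes\C^*$. First I would recall that the origin of $\ba^4$ corresponds to the maximal ideal generated by the images of $x,y,z,t$ in $\C[X]$, and that by the discussion in Section \ref{sec:aut} every automorphism preserves the Makar-Limanov invariant $\C[x]$ and the Derksen invariant $\C[x,z,t]$. Preserving $\C[x]$ means $x$ is sent to an affine function $ax+c$ of $x$ alone; preserving the singular locus structure should force $c=0$, so the ideal $(x)$ is preserved (this is already noted in the text).

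The key step is to handle the unipotent part $\A_1(d;r)$, where $r=z^k+t^l+x$. By Corollary \ref{generators}, every element of $\A_1(d;r)$ is a product of $\A_d$ and automorphisms $\varphi_{x\gamma r}$. Since elements of $\A_1$ are congruent to the identity modulo $(x)$, they act trivially on the cotangent space at the origin modulo $x$-corrections, so the whole unipotent part fixes the origin almost by construction; the substance is checking that the $\C^*$-factor does too. The $\C^*$-action is the hyperbolic action used to define the grading, and its fixed point is exactly the origin, so any automorphism in the $\C^*$-factor fixes the origin. I would then argue that since $\A$ is generated by these two pieces and each fixes the origin (as a point of $X\subset\ba^4$), so does every automorphism.

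The subtle point — and where I expect the Notation remark about $\overline\A_j^d$ versus $\overline\A_j^{d+1}$ to come in — is verifying that automorphisms in $\A_1(d;r)$ genuinely fix the origin as a point of $\ba^4$, not merely modulo some power of $x$. An automorphism $\varphi_{x\gamma r}\in\A_1$ satisfies $\varphi(z)\equiv z$, $\varphi(t)\equiv t$ modulo $(x)$, but to pin down the image of the origin one must control the constant terms of $\varphi(z),\varphi(t)$ and of $\tilde\varphi(y)$ exactly, not just modulo $(x)$. This is why one passes to the finer cutoff: working modulo $(x^{d+1})$ rather than $(x^d)$ lets one read off the relevant lowest-order correction terms and confirm they vanish at the origin. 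The main obstacle will therefore be the bookkeeping on $\tilde\varphi(y)$: recalling that if $\varphi(r)=a r+bx^d$ then $\tilde\varphi(y)=ay-b$, I must check that $a-1$ and $b$ lie in ideals vanishing at the origin, so that $\tilde\varphi(y)$ has no constant term.

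Concretely, I would (i) reduce to generators via the semidirect decomposition and Corollary \ref{generators}; (ii) dispose of the $\C^*$-factor using that the origin is its unique relevant fixed point; (iii) for $\varphi=\varphi_{x\gamma r}\in\A_1(d;r)$, evaluate $\varphi(x),\varphi(z),\varphi(t)$ at the origin, using $\varphi\equiv\id\bmod(x)$ and the explicit exponential form $\overline\varphi_{x\gamma r}=\exp(xD_{\gamma r})$ to see that each of $z,t$ picks up only terms divisible by $x$, hence no constant term survives; and (iv) evaluate $\tilde\varphi(y)=ay-b$ at the origin, examining $a,b$ via the $\overline\A_d^{d+1}$-level expansion to confirm the constant term of $\tilde\varphi(y)$ is $0$. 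Combining (i)–(iv) shows every generator fixes the origin, and since fixing a point is preserved under composition, the full automorphism group fixes the origin.
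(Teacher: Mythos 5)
Your skeleton matches the paper's proof --- reduce via the semidirect product to the $\C^*$-factor plus $\A_1(d;r)$, work on generators, identify the crux as the absence of a constant term in the $b$ of $\tilde\varphi(y)=ay-b$, and bring in the cutoff at $x^{d+1}$ --- but there is a genuine gap: you never treat the generators coming from $\A_d$, and correspondingly you never use the hypothesis $2\le k<l$. Corollary \ref{generators} generates $\A_1(d;r)$ by $\A_d$ \emph{together with} the automorphisms $\varphi_{x\gamma r}$, and your steps (iii)--(iv) address only the latter. For $\varphi\in\A_d$ the claim is not ``almost by construction'': writing $\varphi(z)\equiv z+h_tx^d$ and $\varphi(t)\equiv t-h_zx^d \mod (x^{d+1})$, one gets $\varphi(r)=r+bx^d$ with $b(0,z,t)=\{r_0,h\}\in\bigl((r_0)_z,(r_0)_t\bigr)=(kz^{k-1},lt^{l-1})$, and the constant term of $b$ vanishes precisely because $k,l\ge 2$. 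This hypothesis is unavoidable: for $k=1$ the threefold is $\ba^3$ (solve for $z$), and for instance $\varphi(z)=z+x^d$, $\varphi(t)=t$ lies in $\A_d$ and lifts to $\tilde\varphi(y)=y-1$, which moves the origin. An argument that never invokes $2\le k$ therefore cannot be complete. Indeed, your sentence that ``the whole unipotent part fixes the origin almost by construction; the substance is checking that the $\C^*$-factor does too'' has the difficulty backwards: the $\C^*$-factor is the trivial part, and the action of the unipotent part on $y$ is the entire content of the corollary.

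A second, smaller imprecision concerns the generators $\varphi_{x\gamma r}$. The mechanism is not merely ``reading off correction terms modulo $(x^{d+1})$'': the point, as in the paper, is that $\varphi_{x\gamma r}$ is only specified up to composition with elements of $\A_d$, so one may \emph{re-choose} it to be a pre-image of $\overline\varphi^{d+1}_{x\gamma r}$ under $\pi_{d+1}:\A_1\to\overline\A_1^{d+1}$ (surjectivity at level $d+1$ again coming from \cite{E-M-V}); then $\varphi_{x\gamma r}(r)\in(r,x^{d+1})$, so $b$ may be taken in $(x)$ and hence has no constant term. For an \emph{arbitrary} pre-image under $\pi_d$, the vanishing of the constant term of $b$ is not automatic --- it reduces again to the $\A_d$ computation you skipped. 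On the positive side, your steps (i)--(iii) are sound, and your observation that any $\varphi\in\A_1$ preserves the ideal $(x,z,t)$ simply because $\varphi\equiv\id\mod (x)$ is even a bit more direct than the paper's appeal to the origin being the unique singular point of the zero set of $(x,r)$.
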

\begin{proof} The $\C^*$-action on $X$ fixes the origin, so we now consider automorphisms lifted from $\A_1(d;r)$, where $r=z^k+t^l+x$. Note that the ideal $(x,z,t)$ is stable by any automorphism $\varphi$ of $\A_1(d;r)$, since the origin is the unique singular point of the zero set of the ideal $(x,r)$ in $\C[x,z,t]$.   We know that $\varphi$ lifts to an automorphism $\tilde\varphi$ of $\C[X]$ determined as follows. If $\varphi(r)=ar+bx^d$ with $a,b\in \C[x,z,t]$, then $\tilde\varphi(y)=ay-b$. We thus are left to check that $b\in (x,z,t)$, that is, has no constant term.
First consider any automorphism  $\varphi$ of  $\A_d$.  In this case, $\varphi$ is congruent to the identity modulo $(x^d)$, and thus we can choose $a=1$. We have that $\varphi(r)=r+bx^d$, where $b\in\C[x,z,t]$, and $\varphi(z)\equiv z+h_tx^d \mod ( x^{d+1})$ and $\varphi(t)\equiv t-h_zx^{d} \mod ( x^{d+1})$. Then we find that $b(0,z,t)=\{r_0,h\}\in ((r_0)_z, (r_0)_t)$. Since $r_0=z^k+t^l$ and $2\le k,l$,  we have that $b\in (x,z,t)$. 

Now we know that the group of automorphisms is generated by $\A_d$ and ones of the form $\varphi_{x\gamma r}$ where $\gamma\in\C[x,z,t]$.  Thus we are left to consider the automorphisms of the form $\varphi_{x\gamma r}$. Remember that $\varphi_{x\gamma r}$ was chosen as an arbitrary pre-image of $\overline\varphi_{x\gamma r}\in \overline\A_1$ by the homomorphism $\pi=\pi_d$. Thus, we may assume that $\varphi_{x\gamma r}$ is in $\A_1(d+1;r)$. In other words, we can choose $\varphi_{x\gamma r}$ as an element in the pre-image of $\overline\varphi_{x\gamma r}^{d+1}$ by the homomorphism $\pi_{d+1}:\A_1\to \overline\A_1^{d+1}$. This gives that $\varphi_{x\gamma r}(r)$ is in the ideal $(r,x^{d+1})$, and therefore, we may choose $b\in (x)$. 
\end{proof}

\begin{definition}Let $X$ and $Z$ be two affine varieties. An embedding $\varphi: X\to Z$ is  an injective morphism whose image is closed, and which induces an isomorphism between $X$ and $\varphi(X)$. Two embeddings $\varphi_j:X\to Z$  $(j=1,2)$  are said to be {\it equivalent} if there exists an automorphism
$\Phi$  of $Z$ such that 	$\Phi\circ\varphi_1 = \varphi_2$. 
\end{definition}

\begin{definition}Two subvarieties $X_1$ and $X_2$ of an affine variety $Z$ are said to be {\it equivalent} if there exists an automorphism
	$\Phi$  of $Z$ such that 	$\Phi(X_1)$ = $X_2$. If $X_1$ and $X_2$ are hypersurfaces of $Z=\ba^n$,  then we say they are {\it equivalent
hypersurfaces}.
\end{definition}

These two notions are related, but they are not identical. To be more precise, we consider what happens in the case of two isomorphic  subvarieties $X_1$ and $X_2$  of $Z$, where $\phi:X_1\to X_2$ is an isomorphism. Let  $i_j:X_j\to \ba^n$ be the inclusion maps. Then one can ask of the embeddings $i_1$ and $i_2\circ\phi$ of $X_1$ into $\ba^n$ are equivalent as embeddings. The answer is affirmative if and only if the isomorphism $\phi$ extends to an automorphism of $Z$. One can also ask if $X_1$ and $X_2$ are equivalent as subvarieties. Here, the answer is affirmative if and only if there exists an isomorphism between $X_1$ and $X_2$ which extends to an automorphism of the ambient space. In particular, if two isomorphic subvarieties are not equivalent, then there exist inequivalent embeddings of the given subvariety. Finally, if all automorphisms of $X_1$ extend to automorphisms of $Z$, then the two notions become the same : $X_1$ is equivalent as a subvariety of $Z$ to $X_2$ if and only if $i_1$ and $i_2\circ\phi$ are equivalent embeddings. 

Many examples of inequivalent hypersurfaces exist in the literature. See, for example, \cite{S-Y}, \cite{kaliman} and \cite{f-mj} . However, it is a long-standing open problem to know if there is a hypersurface of $\ba^n$ isomorphic to $\ba^{n-1}$ but not equivalent to a hyperplane. For this reason, it is interesting to find inequivalent hypersurfaces  isomorphic to varieties that are, in some sense, close to affine space. In \cite{K-Z}, for example,  Kaliman and Zaidenberg gave examples of acyclic surfaces which admit
non-equivalent embeddings in three-space.  The obstruction to equivalence is of a topological nature. One can also ask what happens for Koras-Russell threefolds. In \cite{d-mj-p}, we give an  example of two inequivalent hypersufaces isomorphic to the Russell cubic. The methods are purely algebraic, because, in particular, they become equivalent as analytic embeddings. This leads to the natural question if all Koras-Russell threefolds have inequivalent representations as hypersurfaces. The method used for the Russell cubic only works in special cases. 

One way to show that two hypersurfaces $V(P)$ and $V(Q)$ are inequivalent is to show that the fibers of $P$ and $Q$ are different. In other words, if $V(P)$ and $V(Q)$ are equivalent as hypersurfaces, then for all $c\in\C^*$, there exists $c'\in\C^*$ such that $V(P-c)\cong V(Q-c')$. So if one can find non-isomorphic fibers, the hypersurfaces are inequivalent. This is, however, not a necessary condition. In \cite{mj-p} and in \cite{poloni}, there are many examples of Danielewski hypersurfaces of $\ba^3$ for which all fibers are isomorphic, but the hypersurfaces are inequivalent. The methods used there for surfaces can be partially adapted to the case of Koras-Russell threefolds. This was how the Russell cubic was treated. For the general case, work is in progress.

\bibliographystyle{amsalpha}

\end{document}